\newtheorem{theorem}{Theorem}[section]
\newtheorem{corollary}[theorem]{Corollary}
\newtheorem{lemma}[theorem]{Lemma}
\theoremstyle{definition}
\theoremstyle{remark}
\begin{document}
\title[On certain subclasses of analytic functions associated with ...]{On
certain subclasses of analytic functions ASSOCIATED WITH POISSON
DISTRIBUTION SERIES}
\author{B.A. Frasin}
\address{Department of Mathematics, Al al-Bayt University, Mafraq, Jordan}
\email{bafrasin@yahoo.com}

\begin{abstract}
In this paper, we find the necessary and sufficient conditions, inclusion
relations for Poisson distribution series $\mathcal{K}(m,z)=z+\sum%
\limits_{n=2}^{\infty }\frac{m^{n-1}}{(n-1)!}e^{-m}z^{n}$ belonging to the
subclasses $\mathcal{S}(k,\lambda )$ and $\mathcal{C}(k,\lambda )$ of
analytic functions with negative coefficients. Further, we consider the
integral operator $\mathcal{G(}m,z\mathcal{)=}\dint\limits_{0}^{z}\frac{%
\mathcal{F}(m,t)}{t}dt$ belonging to the above classes.

\textbf{Mathematics Subject Classification} (2010): 30C45.

\textbf{Keywords}: Analytic functions, Poisson distribution series.
\end{abstract}

\maketitle

\section{\protect\bigskip Introduction and definitions}

Let $\mathcal{A}$ denote the class of the normalized functions of the form%
\begin{equation}
f(z)=z+\sum_{n=2}^{\infty }a_{n}z^{n},
\end{equation}%
which are analytic in the open unit disk $\mathcal{U}=\{z\in \in \mathbb{C}%
:\left\vert z\right\vert <1\}.$ Further, let $\mathcal{T}$ \ be a subclass
of $\mathcal{A}$ consisting of functions of the form, 
\begin{equation}
f(z)=z-\sum\limits_{n=2}^{\infty }\left\vert a_{n}\right\vert z^{n},\qquad
z\in \mathcal{U}\text{.}  \label{m1}
\end{equation}

\noindent A function\ $f$\ of the form (\ref{m1}) is in $\mathcal{S}%
(k,\lambda )$ if it satisfies the condition%
\begin{equation*}
\left\vert \frac{\frac{zf^{\prime }(z)}{(1-\lambda )f(z)+\lambda zf^{\prime
}(z)}-1}{\frac{zf^{\prime }(z)}{(1-\lambda )f(z)+\lambda zf^{\prime }(z)}+1}%
\right\vert <k,\ \ (0<k\leq 1,\text{ }0\leq \lambda <1,z\in \mathcal{U})
\end{equation*}%
and\textit{\ }$f\in $\textit{\ }$\mathcal{C}(k,\lambda )$\textit{\ }if and
only if $zf^{\prime }\in \mathcal{S}(k,\lambda ).$The class $\mathcal{S}%
(k,\lambda )$ was introduced by Frasin et al. \cite{fra}.

We note that $\mathcal{S}(k,0)=\mathcal{S}(k)$ and $\mathcal{C}(k,0)=%
\mathcal{C}(k)$, where the classes $\mathcal{S}(k)$ and $\mathcal{C}(k)$
were introduced and studied by Padmanabhan \cite{pad} (see also, \cite{mog}, 
\cite{owa}) .

A function $f\in \mathcal{A}$ is said to be in the class $\mathcal{R}^{\tau
}(A,B)$,$\tau \in \mathbb{C}\backslash \{0\}$, $-1\leq B<A\leq 1,$ if it
satisfies the inequality%
\begin{equation*}
\left\vert \frac{f^{\prime }(z)-1}{(A-B)\tau -B[f^{\prime }(z)-1]}%
\right\vert <1,\ \ \ \;z\in \mathcal{U}.
\end{equation*}

\bigskip This class was introduced by Dixit and Pal \cite{dix}.

\bigskip A variable $X$ is said to be Poisson distributed if it takes the
values $0,1,2,3,\cdots $ with probabilities $e^{-m}$, $m\frac{e^{-m}}{1!}$, $%
m^{2}\frac{e^{-m}}{2!}$, $m^{3}\frac{e^{-m}}{3!},\cdots $ respectively,
where $m$ is called the parameter. Thus

\begin{equation*}
P(X=r)=\frac{m^{r}e^{-m}}{r!},\text{ }r=0,1,2,3,\cdots .
\end{equation*}

Very recently, Porwal \cite{por1} (see also, \cite{mur1,mur2}) introduce a
power series whose coefficients are probabilities of Poisson distribution%
\begin{equation*}
\mathcal{K}(m,z)=z+\sum\limits_{n=2}^{\infty }\frac{m^{n-1}}{(n-1)!}%
e^{-m}z^{n},\text{\ \ \ \ \ \ \ \ }z\in \mathcal{U}\text{,}
\end{equation*}

where $m>0.$ By ratio test the radius of convergence of above series is
infinity. In \cite{por1}, Porwal also defined the series

\begin{equation*}
\mathcal{F}(m,z)=2z-\mathcal{K}(m,z)=z-\sum\limits_{n=2}^{\infty }\frac{%
m^{n-1}}{(n-1)!}e^{-m}z^{n},\text{\ \ \ \ \ \ \ \ }z\in \mathcal{U}\text{.}
\end{equation*}

Using the Hadamard product, Porwal and Kumar \cite{por2} introduced a new
linear operator $\mathcal{I}(m,z):\mathcal{A\rightarrow A}$ defined by%
\begin{equation*}
\mathcal{I}(m,z)f=\mathcal{K}(m,z)\ast f(z)=z+\sum\limits_{n=2}^{\infty }%
\frac{m^{n-1}}{(n-1)!}e^{-m}a_{n}z^{n},\text{\ \ \ \ \ \ \ \ }z\in \mathcal{U%
}\text{,}
\end{equation*}%
where $\ast $ denote the convolution or Hadamard product of two series.

Motivated by several earlier results on connections between various
subclasses of analytic and univalent functions by using hypergeometric
functions (see \cite{1,2,8,9}) and by the recent investigations of Porwal (%
\cite{por1, por2, por3}), in the present paper we determine the necessary
and sufficient conditions for $\mathcal{F}(m,z)$ to be in our new classes $%
\mathcal{S}(k,\lambda )$ and $\mathcal{C}(k,\lambda )$ and connections of
these subclasses with $\mathcal{R}^{\tau }(A,B)$. Finally, we give
conditions for the integral operator $\mathcal{G(}m,z\mathcal{)=}%
\dint\limits_{0}^{z}\frac{\mathcal{F}(m,t)}{t}dt$ belonging to the classes $%
\mathcal{S}(k,\lambda )$ and $\mathcal{C}(k,\lambda ).$

To establish our main results, we will require the following Lemmas.

\begin{lemma}
\label{lem1}\cite{fra} A function\ $f$\ of the form (\ref{m1}) is in $%
\mathcal{S}(k,\lambda )$ if and only if it satisfies%
\begin{equation}
\sum\limits_{n=2}^{\infty }[n((1-\lambda )+k(1+\lambda ))-(1-\lambda
)(1-k)]\left\vert a_{n}\right\vert \leq 2k  \label{b1}
\end{equation}
\end{lemma}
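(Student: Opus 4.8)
The plan is to turn the membership condition defining $\mathcal{S}(k,\lambda)$ into an inequality between power series with nonnegative coefficients, and then argue the two implications separately. Writing $P(z)=(1-\lambda)f(z)+\lambda zf'(z)$ and $w=zf'(z)/P(z)$, a direct computation gives
\[
\frac{w-1}{w+1}=\frac{(1-\lambda)\bigl(zf'(z)-f(z)\bigr)}{(1+\lambda)zf'(z)+(1-\lambda)f(z)} ,
\]
so that for $f$ of the form (\ref{m1}), where $zf'(z)-f(z)=-\sum_{n=2}^{\infty}(n-1)|a_n|z^{n}$ and $(1+\lambda)zf'(z)+(1-\lambda)f(z)=2z-\sum_{n=2}^{\infty}[(1+\lambda)n+(1-\lambda)]|a_n|z^{n}$, the defining inequality $\left|(w-1)/(w+1)\right|<k$ becomes, after cancelling the common factor $z$,
\[
(1-\lambda)\Bigl|\sum_{n=2}^{\infty}(n-1)|a_n|z^{n-1}\Bigr|<k\,\Bigl|2-\sum_{n=2}^{\infty}\bigl[(1+\lambda)n+(1-\lambda)\bigr]|a_n|z^{n-1}\Bigr|,\qquad z\in\mathcal{U}.
\]

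For sufficiency I would assume (\ref{b1}) and estimate the left side from above and the right side from below by the triangle inequality, replacing $z^{n-1}$ by $|z|^{n-1}$ and then, since $|z|<1$, by $1$. The difference ``left side $-\ k\cdot$ right side'' is then dominated by $\sum_{n=2}^{\infty}\bigl[(1-\lambda)(n-1)+k((1+\lambda)n+(1-\lambda))\bigr]|a_n|-2k$; since
\[
(1-\lambda)(n-1)+k\bigl((1+\lambda)n+(1-\lambda)\bigr)=n\bigl((1-\lambda)+k(1+\lambda)\bigr)-(1-\lambda)(1-k),
\]
this is exactly $\sum_{n=2}^{\infty}\bigl[n((1-\lambda)+k(1+\lambda))-(1-\lambda)(1-k)\bigr]|a_n|-2k\le 0$ by hypothesis. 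This yields the required strict inequality, and along the way confirms that the relevant denominator does not vanish, so the quotient defining the class is legitimate.

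For necessity I would restrict $z=r$ to the real interval $(0,1)$, where every series involved has real nonnegative terms and $zf'(z)-f(z)\le 0$; the condition reads $(1-\lambda)\sum(n-1)|a_n|r^{n-1}<k\bigl|2-\sum[(1+\lambda)n+(1-\lambda)]|a_n|r^{n-1}\bigr|$. A short continuity argument shows $2-\sum[(1+\lambda)n+(1-\lambda)]|a_n|r^{n-1}$ cannot vanish on $[0,1)$ — if it did, the nonnegative left side would be forced to be negative — hence it stays positive; dropping the absolute value and letting $r\to 1^-$ (Abel's theorem, all terms being nonnegative) gives $(1-\lambda)\sum(n-1)|a_n|\le k\bigl(2-\sum[(1+\lambda)n+(1-\lambda)]|a_n|\bigr)$, which rearranges to (\ref{b1}). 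The main obstacle is precisely this necessity step: one must ensure the class-defining expression is well defined (no vanishing denominator on $(0,1)$) and justify the passage $r\to 1^-$; both are handled by the positivity and monotonicity of series with nonnegative coefficients, and these are the only non-routine points.
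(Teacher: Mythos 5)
Your argument is correct and is exactly the standard Silverman-type proof of such coefficient characterizations: reduce $\left|\frac{w-1}{w+1}\right|<k$ to $(1-\lambda)\left|zf'-f\right|<k\left|(1+\lambda)zf'+(1-\lambda)f\right|$, prove sufficiency by the triangle inequality and $|z|<1$, and prove necessity by taking $z=r\to 1^{-}$ along the positive real axis. The paper itself states this lemma without proof, citing Frasin et al.\ \cite{fra}, and the proof given there is this same argument, so there is nothing to add beyond noting that the strictness in the sufficiency step comes from $|z|^{n-1}<1$ (the case $f(z)=z$ being trivial), which you have in effect already observed.
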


\textit{where }$0<k\leq 1$ \textit{\ and} $0\leq \lambda <1.$ \textit{The
result is sharp.}

\begin{lemma}
\label{lem2}\cite{fra}\textit{\ A function\ }$f$\textit{\ of the form (\ref%
{m1}) is in }$\mathcal{C}(k,\lambda )$\textit{\ if and only if it satisfies}
\end{lemma}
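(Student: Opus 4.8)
The plan is to derive this characterization directly from Lemma \ref{lem1} by exploiting the defining relation $f\in\mathcal{C}(k,\lambda)\iff zf'\in\mathcal{S}(k,\lambda)$, which is essentially an Alexander-type transformation. First I would observe that if $f$ has the form (\ref{m1}), i.e. $f(z)=z-\sum_{n=2}^{\infty}\left\vert a_{n}\right\vert z^{n}$, then
\begin{equation*}
zf'(z)=z-\sum_{n=2}^{\infty}n\left\vert a_{n}\right\vert z^{n},
\end{equation*}
which is again a function of the form (\ref{m1}), now with nonnegative coefficients $n\left\vert a_{n}\right\vert$ in place of $\left\vert a_{n}\right\vert$. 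Hence $zf'\in\mathcal{T}$, and Lemma \ref{lem1} is applicable to $zf'$.

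Next I would simply substitute: by Lemma \ref{lem1}, $zf'\in\mathcal{S}(k,\lambda)$ if and only if
\begin{equation*}
\sum_{n=2}^{\infty}\bigl[n((1-\lambda)+k(1+\lambda))-(1-\lambda)(1-k)\bigr]\,n\left\vert a_{n}\right\vert\leq 2k,
\end{equation*}
and since the left-hand side is exactly $\sum_{n=2}^{\infty} n\bigl[n((1-\lambda)+k(1+\lambda))-(1-\lambda)(1-k)\bigr]\left\vert a_{n}\right\vert$, this is precisely the asserted necessary and sufficient condition for $f\in\mathcal{C}(k,\lambda)$. Sharpness is inherited from the sharpness clause in Lemma \ref{lem1}: the extremal function for $\mathcal{S}(k,\lambda)$ with coefficient index $n$ transfers to an extremal function for $\mathcal{C}(k,\lambda)$ after dividing its $n$-th coefficient by $n$ (equivalently, replacing $z^{n}$ by $z^{n}/n$ in the integral sense).

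I do not anticipate a genuine obstacle here; the only things that require a line of justification are (i) checking that $zf'$ still lies in $\mathcal{T}$ so that Lemma \ref{lem1} genuinely applies, and (ii) carefully carrying the index weight $n$ through the inequality without dropping it. If one wanted a self-contained argument avoiding Lemma \ref{lem1}, one could instead mimic its proof — estimate the modulus in the definition of $\mathcal{C}(k,\lambda)$ along the real axis and use the triangle inequality for sufficiency, and let $z\to1^{-}$ along the reals for necessity — but reducing to Lemma \ref{lem1} via $zf'$ is shorter and cleaner, so that is the route I would take.
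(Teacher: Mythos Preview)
Your argument is correct: the reduction via the Alexander-type relation $f\in\mathcal{C}(k,\lambda)\iff zf'\in\mathcal{S}(k,\lambda)$, together with the observation that $zf'$ again has the form (\ref{m1}) with coefficients $n|a_n|$, immediately turns the coefficient inequality of Lemma~\ref{lem1} into the one asserted here, and sharpness carries over as you describe.

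There is nothing to compare against, however: the paper does not prove Lemma~\ref{lem2}. It is quoted from \cite{fra} as a preliminary result, with no argument given. Your derivation from Lemma~\ref{lem1} is exactly the standard route by which such convex-class coefficient characterizations are obtained from their starlike counterparts, and is almost certainly how it is established in \cite{fra} as well.
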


\begin{equation}
\sum\limits_{n=2}^{\infty }n[n((1-\lambda )+k(1+\lambda ))-(1-\lambda
)(1-k)]\left\vert a_{n}\right\vert \leq 2k  \label{h}
\end{equation}

\textit{where }$0<k\leq 1$ \textit{\ and} $0\leq \lambda <1.$ \textit{The
result is sharp.}\ 

\begin{lemma}
\label{lem3}\cite{dix}If $f$\textit{\ }$\in $\textit{\ }$\mathcal{R}^{\tau
}(A,B)$ is of the form , then%
\begin{equation*}
\left\vert a_{n}\right\vert \leq (A-B)\frac{\left\vert \tau \right\vert }{n}%
,\ \ \ \ \ \ n\in \mathbb{N}-\{1\}\text{.}
\end{equation*}%
\textit{The result is sharp.}
\end{lemma}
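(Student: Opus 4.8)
The plan is to replace the membership condition by a Schwarz-function representation of $f'$ and then to read the coefficient bound off Rogosinski's theorem on subordination to a convex univalent function.

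Since $f\in\mathcal{A}$ is normalized, $f'(0)=1$, so with $\Phi(z)=f'(z)-1$ we have $\Phi(0)=0$. Setting
\begin{equation*}
w(z)=\frac{\Phi(z)}{(A-B)\tau-B\,\Phi(z)},
\end{equation*}
we get $w(0)=0$, and the defining inequality of $\mathcal{R}^{\tau}(A,B)$ says precisely that $|w(z)|<1$ on $\mathcal{U}$, so $w$ is a Schwarz function. Because $A-B>0$ and $\tau\neq0$, this relation can be solved for $\Phi$, giving
\begin{equation*}
f'(z)=1+\frac{(A-B)\tau\,w(z)}{1+B\,w(z)}=q(w(z)),\qquad q(z):=1+(A-B)\tau\,\frac{z}{1+Bz},
\end{equation*}
so that $f'\prec q$ in $\mathcal{U}$, and therefore $f'-1\prec q-1$ with $(q-1)(0)=0$.

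Next I would check that $q(z)-1=(A-B)\tau\,\frac{z}{1+Bz}$ is convex univalent in $\mathcal{U}$. Since $-1\le B<A\le1$, the M\"obius map $z\mapsto z/(1+Bz)$ has no pole in $\mathcal{U}$ (its only pole is $-1/B$, of modulus $\ge 1$), hence takes $\mathcal{U}$ univalently onto a disk or a half-plane, each of which is convex; multiplication by the nonzero constant $(A-B)\tau$ is a rotation and dilation and preserves both convexity and univalence. Writing $q(z)-1=\sum_{n=1}^{\infty}B_{n}z^{n}$, one has $B_{1}=(A-B)\tau$. Now Rogosinski's theorem states that every Taylor coefficient of a function subordinate to a convex univalent function is bounded in modulus by the modulus of the first Taylor coefficient of the majorant; applied to $f'-1\prec q-1$, it shows that the coefficient of $z^{n-1}$ in $f'(z)-1$, namely $na_{n}$, satisfies $n|a_{n}|\le|B_{1}|=(A-B)|\tau|$, which is the asserted estimate. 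Sharpness follows by taking, for fixed $n\ge2$, the Schwarz function $w(z)=z^{n-1}$: integrating $f'(z)=1+(A-B)\tau z^{n-1}/(1+Bz^{n-1})$ produces an $f\in\mathcal{R}^{\tau}(A,B)$ with $a_{n}=(A-B)\tau/n$.

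The only step with real content is the coefficient estimate, and the convexity established above is genuinely needed there: the purely elementary routes do not give the sharp constant. Comparing coefficients in $\Phi(z)(1+Bw(z))=(A-B)\tau w(z)$ and estimating with the Schwarz-coefficient inequality $\sum_{k}|c_{k}|^{2}\le1$ for $w(z)=\sum_{k}c_{k}z^{k}$, or integrating $|\Phi|^{2}<|(A-B)\tau-B\Phi|^{2}$ over circles $|z|=r<1$, only yields $|a_{n}|\le(A-B)|\tau|/(n\sqrt{1-B^{2}})$, which is weaker in general and becomes vacuous when $|B|=1$. It is the convexity of $q-1$ that deletes the extraneous factor $1/\sqrt{1-B^{2}}$, so establishing that convexity is the heart of the matter.
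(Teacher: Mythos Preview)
Your argument is correct: the Schwarz-function representation $f'(z)=q(w(z))$ is exactly the subordination $f'\prec q$, the map $q(z)-1=(A-B)\tau\,z/(1+Bz)$ is indeed convex univalent on $\mathcal{U}$ for $-1\le B<A\le1$, and Rogosinski's coefficient theorem then gives $n|a_n|\le (A-B)|\tau|$ with equality for the choice $w(z)=z^{n-1}$.

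As for comparison with the paper: there is nothing to compare. Lemma~\ref{lem3} is quoted from Dixit and Pal \cite{dix} and is not proved in the present paper at all; it is used only as a black-box coefficient bound in the proofs of Theorems~\ref{th5} and~\ref{th6}. Your proof supplies what the paper omits. The subordination-plus-Rogosinski route you take is in fact the standard way this inequality is obtained (and is essentially what Dixit and Pal do in the cited source), so you have reconstructed the expected proof. Your closing paragraph about the weaker $1/\sqrt{1-B^{2}}$ bounds is accurate commentary but not needed for the lemma itself.
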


\section{The necessary and sufficient conditions}

\begin{theorem}
\label{th1}If $m>0$, $0<k\leq 1$ \textit{\ and} $0\leq \lambda <1,$ then $%
\mathcal{F}(m,z)$ is in $\mathcal{S}(k,\lambda )$ if and only if 
\begin{equation}
((1-\lambda )+k(1+\lambda ))me^{m}\leq 2k.  \label{d1}
\end{equation}
\end{theorem}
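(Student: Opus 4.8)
The plan is to apply Lemma~\ref{lem1} directly to the function $\mathcal{F}(m,z)$. Since $\mathcal{F}(m,z)=z-\sum_{n=2}^{\infty}\frac{m^{n-1}}{(n-1)!}e^{-m}z^{n}$ is of the form~(\ref{m1}) with $|a_{n}|=\frac{m^{n-1}}{(n-1)!}e^{-m}$, the membership $\mathcal{F}(m,z)\in\mathcal{S}(k,\lambda)$ is equivalent, by Lemma~\ref{lem1}, to
\begin{equation*}
\sum_{n=2}^{\infty}\bigl[n((1-\lambda)+k(1+\lambda))-(1-\lambda)(1-k)\bigr]\frac{m^{n-1}}{(n-1)!}e^{-m}\leq 2k .
\end{equation*}
So the whole task reduces to evaluating this series in closed form and showing it equals (a constant multiple of) $((1-\lambda)+k(1+\lambda))me^{m}$ after multiplying through, or rather that the inequality above is equivalent to~(\ref{d1}).

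The key computational step is to split the coefficient $n((1-\lambda)+k(1+\lambda))-(1-\lambda)(1-k)$ and write $n = (n-1)+1$, so that
\begin{equation*}
\sum_{n=2}^{\infty}n\,\frac{m^{n-1}}{(n-1)!} = \sum_{n=2}^{\infty}\frac{m^{n-1}}{(n-2)!}+\sum_{n=2}^{\infty}\frac{m^{n-1}}{(n-1)!} = m e^{m}+(e^{m}-1),
\end{equation*}
using the reindexing $\sum_{n=2}^{\infty}\frac{m^{n-1}}{(n-2)!}=m\sum_{j=0}^{\infty}\frac{m^{j}}{j!}=me^{m}$ and $\sum_{n=2}^{\infty}\frac{m^{n-1}}{(n-1)!}=e^{m}-1$. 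Similarly $\sum_{n=2}^{\infty}\frac{m^{n-1}}{(n-1)!}=e^{m}-1$ handles the constant term. Substituting these and factoring out $e^{-m}$, the left-hand side becomes
\begin{equation*}
((1-\lambda)+k(1+\lambda))\bigl(me^{m}+e^{m}-1\bigr)e^{-m}-(1-\lambda)(1-k)(e^{m}-1)e^{-m}.
\end{equation*}
I would then simplify: the terms involving $(e^{m}-1)e^{-m}=1-e^{-m}$ combine to give $\bigl[((1-\lambda)+k(1+\lambda))-(1-\lambda)(1-k)\bigr](1-e^{-m}) = 2k(1-e^{-m})$, since $((1-\lambda)+k(1+\lambda))-(1-\lambda)(1-k)=2k$. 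Hence the left-hand side equals $((1-\lambda)+k(1+\lambda))m + 2k(1-e^{-m})$, wait—I should be careful: $me^{m}\cdot e^{-m}=m$, so the first bracket contributes $((1-\lambda)+k(1+\lambda))m$, and the inequality $\le 2k$ becomes $((1-\lambda)+k(1+\lambda))m + 2k - 2ke^{-m}\le 2k$, i.e. $((1-\lambda)+k(1+\lambda))m\le 2ke^{-m}$, which on multiplying by $e^{m}$ is exactly~(\ref{d1}).

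The only real obstacle is bookkeeping: one must be careful with the reindexing of each series (the shift from $(n-1)!$ to $(n-2)!$ requires $n\ge 2$, which is fine) and with correctly combining the two $(1-e^{-m})$ contributions via the identity $((1-\lambda)+k(1+\lambda))-(1-\lambda)(1-k)=2k$. No convergence issue arises since the Poisson series converges for all $m$. I would close by remarking that the equivalence is genuinely "if and only if," inherited from the "if and only if" in Lemma~\ref{lem1}, and that sharpness is likewise inherited.
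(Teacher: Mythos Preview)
Your proposal is correct and follows essentially the same approach as the paper: apply Lemma~\ref{lem1} to the coefficients $|a_n|=\frac{m^{n-1}}{(n-1)!}e^{-m}$, use the decomposition $n=(n-1)+1$ together with the identities $\sum_{n\ge 2}\frac{m^{n-1}}{(n-2)!}=me^{m}$ and $\sum_{n\ge 2}\frac{m^{n-1}}{(n-1)!}=e^{m}-1$, and then simplify via $((1-\lambda)+k(1+\lambda))-(1-\lambda)(1-k)=2k$. The only cosmetic difference is that the paper first clears the $e^{-m}$ (working with the bound $2ke^{m}$) and applies the $2k$-identity at the coefficient level, whereas you keep $e^{-m}$ and combine the $(1-e^{-m})$ terms at the end; the computations are algebraically identical.
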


\begin{proof}
Since%
\begin{equation}
\mathcal{F}(m,z)=z-\sum\limits_{n=2}^{\infty }\frac{m^{n-1}}{(n-1)!}%
e^{-m}z^{n}  \label{n5}
\end{equation}%
according to (\ref{b1}) of Lemma \ref{lem1}, we must show that%
\begin{equation}
\sum\limits_{n=2}^{\infty }[n((1-\lambda )+k(1+\lambda ))+(1-\lambda )(k-1)]%
\frac{m^{n-1}}{(n-1)!}\leq 2ke^{m}.  \label{n1}
\end{equation}%
Writing $n=(n-1)+1$, we have 
\begin{eqnarray}
&&\sum\limits_{n=2}^{\infty }[n((1-\lambda )+k(1+\lambda ))+(1-\lambda
)(k-1)]\frac{m^{n-1}}{(n-1)!}  \notag \\
&=&\ \sum\limits_{n=2}^{\infty }[(n-1)((1-\lambda )+k(1+\lambda ))+2k]\frac{%
m^{n-1}}{(n-1)!}\   \notag \\
&=&[(1-\lambda )+k(1+\lambda )]\ \sum\limits_{n=2}^{\infty }\frac{m^{n-1}}{%
(n-2)!}+2k\sum\limits_{n=2}^{\infty }\frac{m^{n-1}}{(n-1)!}  \notag \\
&=&\ \left( (1-\lambda )+k(1+\lambda ))me^{m}+2k\allowbreak (e^{m}-1\right) .
\label{oo}
\end{eqnarray}%
But this last expression is bounded above by $2ke^{m}~$if and only if~(\ref%
{d1}) holds.
\end{proof}

\begin{theorem}
\label{th2}If $m>0$, $0<k\leq 1$ \textit{\ and} $0\leq \lambda <1,$ then $%
\mathcal{F}(m,z)$ is in $\mathcal{C}(k,\lambda )$ if and only if 
\begin{equation}
((1-\lambda )+k(1+\lambda ))m^{2}e^{m}+2(1+2k+k\lambda -\lambda )me^{m}\leq
2k  \label{d2}
\end{equation}
\end{theorem}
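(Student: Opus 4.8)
The plan is to repeat the argument of Theorem~\ref{th1} verbatim in structure, but with the coefficient inequality \eqref{h} of Lemma~\ref{lem2} in place of \eqref{b1}. Since $\mathcal{F}(m,z)=z-\sum_{n=2}^{\infty}\frac{m^{n-1}}{(n-1)!}e^{-m}z^{n}$, substituting $|a_{n}|=\frac{m^{n-1}}{(n-1)!}e^{-m}$ into \eqref{h} shows that $\mathcal{F}(m,z)\in\mathcal{C}(k,\lambda)$ if and only if
\begin{equation*}
\sum_{n=2}^{\infty}n\bigl[n((1-\lambda)+k(1+\lambda))-(1-\lambda)(1-k)\bigr]\frac{m^{n-1}}{(n-1)!}\leq 2ke^{m},
\end{equation*}
so the entire task is to evaluate this series in closed form and compare it with $2ke^{m}$.

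To do so, I would reuse the identity already exploited in Theorem~\ref{th1}, namely that the inner factor satisfies $n((1-\lambda)+k(1+\lambda))-(1-\lambda)(1-k)=(n-1)((1-\lambda)+k(1+\lambda))+2k$; writing $A=(1-\lambda)+k(1+\lambda)$ for brevity, the full weight becomes $n\bigl[(n-1)A+2k\bigr]$. Then, substituting $n=(n-1)+1$ and $(n-1)^{2}=(n-1)(n-2)+(n-1)$, one obtains
\begin{equation*}
n\bigl[(n-1)A+2k\bigr]=A(n-1)(n-2)+(2A+2k)(n-1)+2k.
\end{equation*}
Multiplying by $\frac{m^{n-1}}{(n-1)!}$ and reindexing the three resulting sums gives $A\sum_{n\geq2}\frac{m^{n-1}}{(n-3)!}=Am^{2}e^{m}$, $(2A+2k)\sum_{n\geq2}\frac{m^{n-1}}{(n-2)!}=(2A+2k)me^{m}$, and $2k\sum_{n\geq2}\frac{m^{n-1}}{(n-1)!}=2k(e^{m}-1)$.

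Hence the displayed inequality is equivalent to $Am^{2}e^{m}+(2A+2k)me^{m}+2k(e^{m}-1)\leq 2ke^{m}$, i.e.\ (after cancelling $2ke^{m}$ from both sides) to $Am^{2}e^{m}+(2A+2k)me^{m}\leq 2k$. It then remains only to check the bookkeeping $2A+2k=2\bigl((1-\lambda)+k(1+\lambda)\bigr)+2k=2(1+2k+k\lambda-\lambda)$, which turns this into exactly \eqref{d2}. I do not expect a genuine obstacle here: the power series has infinite radius of convergence, so all the rearrangements are legitimate, and the only points demanding care are the arithmetic in splitting the quadratic-in-$n$ weight into the falling factorials $(n-1)(n-2)$, $(n-1)$, $1$ and the verification that the resulting linear coefficient $2A+2k$ collapses to the coefficient $2(1+2k+k\lambda-\lambda)$ appearing in \eqref{d2}.
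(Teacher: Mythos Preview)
Your proposal is correct and follows essentially the same approach as the paper: both reduce via Lemma~\ref{lem2} to evaluating $\sum_{n\geq2}n[nA+(1-\lambda)(k-1)]\frac{m^{n-1}}{(n-1)!}$ by decomposing the quadratic weight in $n$ into the falling factorials $(n-1)(n-2)$, $(n-1)$, $1$ and summing each piece in closed form. The only cosmetic difference is that the paper expands $n^{2}=(n-1)(n-2)+3(n-1)+1$ and $n=(n-1)+1$ directly, whereas you first reuse the Theorem~\ref{th1} identity $nA-(1-\lambda)(1-k)=(n-1)A+2k$ before multiplying by $n$; both routes yield the same three summands $Am^{2}e^{m}$, $2(1+2k+k\lambda-\lambda)me^{m}$, $2k(e^{m}-1)$.
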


\begin{proof}
In view of Lemma \ref{lem2}, it suffices to show that%
\begin{equation*}
\sum\limits_{n=2}^{\infty }n[n((1-\lambda )+k(1+\lambda ))+(1-\lambda )(k-1)]%
\frac{m^{n-1}}{(n-1)!}\leq 2ke^{m}.
\end{equation*}%
Now%
\begin{eqnarray}
&&\sum\limits_{n=2}^{\infty }n[n((1-\lambda )+k(1+\lambda ))+(1-\lambda
)(k-1)]\frac{m^{n-1}}{(n-1)!}  \notag \\
&=&\sum\limits_{n=2}^{\infty }n^{2}((1-\lambda )+k(1+\lambda ))+n(1-\lambda
)(k-1)]\frac{m^{n-1}}{(n-1)!}.  \label{v}
\end{eqnarray}%
Writing $n=(n-1)+1$ and $n^{2}=(n-1)(n-2)+3(n-1)+1$, in (\ref{v}) we see that%
\begin{eqnarray*}
&&\sum\limits_{n=2}^{\infty }n^{2}((1-\lambda )+k(1+\lambda ))+n(1-\lambda
)(k-1)]\frac{m^{n-1}}{(n-1)!} \\
&=&\sum\limits_{n=2}^{\infty }(n-1)(n-2)((1-\lambda )+k(1+\lambda ))\frac{%
m^{n-1}}{(n-1)!} \\
&&+\sum\limits_{n=2}^{\infty }\ (n-1)[3((1-\lambda )+k(1+\lambda
)+(1-\lambda )(k-1)]\frac{m^{n-1}}{(n-1)!}+\sum\limits_{n=2}^{\infty }2k%
\frac{m^{n-1}}{(n-1)!}\ \  \\
&=&((1-\lambda )+k(1+\lambda ))\sum\limits_{n=2}^{\infty }\frac{m^{n-1}}{%
(n-3)!}\ +2(1+2k+k\lambda -\lambda )\sum\limits_{n=2}^{\infty }\frac{m^{n-1}%
}{(n-2)!} \\
&&+2k\sum\limits_{n=2}^{\infty }\frac{m^{n-1}}{(n-1)!} \\
&=&((1-\lambda )+k(1+\lambda ))m^{2}e^{m}+2(1+2k+k\lambda -\lambda
)me^{m}+2k\ (e^{m}-1).\ \ \ \ \ \ \ \ \ \ \ \ \ \ \ \ \ \ \ \ \ \ \ \ \ \ \
\ \ \ \ \ \ \ \ \ \ \ \ \ 
\end{eqnarray*}

But this last expression is bounded above by $2ke^{m}~$if and only if~(\ref%
{d2}) holds.
\end{proof}

By specializing the parameter $\lambda =0$ in Theorems \ref{th1} and \ref%
{th2} , we have the following corollaries.

\begin{corollary}
If $m>0$\textit{\ and} $0<k\leq 1,$ then $\mathcal{F}(m,z)$ is in $\mathcal{S%
}(k)$ if and only if 
\begin{equation}
(1+k)me^{m}\leq 2k.  \label{tt}
\end{equation}
\end{corollary}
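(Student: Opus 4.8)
The plan is to obtain this corollary as an immediate specialization of Theorem \ref{th1}, exploiting the identification $\mathcal{S}(k,0)=\mathcal{S}(k)$ recorded in the introduction. Concretely, I would begin by observing that a function $\mathcal{F}(m,z)$ lies in $\mathcal{S}(k)$ precisely when it lies in $\mathcal{S}(k,\lambda)$ with the parameter choice $\lambda=0$. Theorem \ref{th1} then applies verbatim with $\lambda=0$.

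Next I would substitute $\lambda=0$ into the necessary and sufficient condition \eqref{d1}. The coefficient $((1-\lambda)+k(1+\lambda))$ becomes $((1-0)+k(1+0))=1+k$, so \eqref{d1} reduces to $(1+k)me^{m}\leq 2k$, which is exactly \eqref{tt}. This gives both directions of the equivalence at once, since Theorem \ref{th1} is itself an ``if and only if'' statement.

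There is essentially no obstacle here: the entire content is the routine algebraic simplification of $(1-\lambda)+k(1+\lambda)$ at $\lambda=0$, together with the definitional remark that the classes coincide. If anything, the only point worth stating carefully is that the hypotheses $m>0$ and $0<k\leq 1$ of the corollary match those of Theorem \ref{th1} (with the now-vacuous constraint $0\leq\lambda<1$ satisfied by $\lambda=0$), so that Theorem \ref{th1} is genuinely applicable. I would therefore write the proof in a single short sentence invoking Theorem \ref{th1} with $\lambda=0$.
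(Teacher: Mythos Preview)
Your proposal is correct and matches the paper's approach exactly: the paper obtains this corollary by specializing $\lambda=0$ in Theorem~\ref{th1}, which is precisely what you do.
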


\begin{corollary}
If $m>0$\textit{\ and} $0<k\leq 1,$ then $\mathcal{F}(m,z)$ is in $\mathcal{C%
}(k)$ if and only if 
\begin{equation}
(1+k)m^{2}e^{m}+2(1+2k)me^{m}\leq 2k.
\end{equation}
\end{corollary}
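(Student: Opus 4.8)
The plan is to obtain this corollary as the immediate specialization $\lambda = 0$ of Theorem~\ref{th2}, exactly as announced in the sentence preceding the corollaries. First I would recall that Theorem~\ref{th2} asserts that, for $m>0$, $0<k\le 1$ and $0\le\lambda<1$, the function $\mathcal{F}(m,z)$ belongs to $\mathcal{C}(k,\lambda)$ if and only if
\begin{equation*}
((1-\lambda )+k(1+\lambda ))m^{2}e^{m}+2(1+2k+k\lambda -\lambda )me^{m}\leq 2k .
\end{equation*}
Then I would invoke the identity $\mathcal{C}(k,0)=\mathcal{C}(k)$ recorded in the introduction, so that membership of $\mathcal{F}(m,z)$ in $\mathcal{C}(k)$ is equivalent to its membership in $\mathcal{C}(k,0)$.

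It then remains only to put $\lambda=0$ in the displayed inequality. Doing so gives $(1-0)+k(1+0)=1+k$ for the coefficient of $m^{2}e^{m}$ and $1+2k+0-0=1+2k$ for the coefficient of $2me^{m}$, which yields precisely
\begin{equation*}
(1+k)m^{2}e^{m}+2(1+2k)me^{m}\leq 2k ,
\end{equation*}
the asserted condition. There is no genuine obstacle here: the only thing to verify is the arithmetic of substituting $\lambda=0$ into the two parameter-dependent coefficients, which is routine. Alternatively, one could reprove the statement from scratch by repeating the argument of Theorem~\ref{th2} — applying Lemma~\ref{lem2} to $\mathcal{F}(m,z)$ and using the decompositions $n=(n-1)+1$ and $n^{2}=(n-1)(n-2)+3(n-1)+1$ together with the evaluations of the resulting series $\sum m^{n-1}/(n-j)!$ — but this is unnecessary once Theorem~\ref{th2} is available.
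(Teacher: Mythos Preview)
Your proposal is correct and mirrors the paper's approach exactly: the paper derives this corollary simply by specializing $\lambda=0$ in Theorem~\ref{th2}, with no separate argument given. Your arithmetic check of the two coefficients is accurate, and the invocation of $\mathcal{C}(k,0)=\mathcal{C}(k)$ is the appropriate justification.
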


\section{Inclusion Properties}

\begin{theorem}
\label{th5}Let $m>0$, $0<k\leq 1$ \textit{\ and} $0\leq \lambda <1.$ If $\
f\in \mathcal{R}^{\tau }(A,B),\ $then $\mathcal{I}(m,z)f$ is in $\mathcal{S}%
(k,\lambda )$ if and only if 
\begin{equation}
(A-B)\left\vert \tau \right\vert \left[ ((1-\lambda )+k(1+\lambda
))(1-e^{-m})+\frac{(1-\lambda )(k-1)}{m}(1-e^{-m}(1+m))\right] \leq 2k.
\label{d3}
\end{equation}
\end{theorem}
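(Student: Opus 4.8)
The approach is the same template used for Theorems~\ref{th1} and \ref{th2}: reduce membership of $\mathcal{I}(m,z)f$ in $\mathcal{S}(k,\lambda)$ to the coefficient inequality of Lemma~\ref{lem1}, and then control the coefficients of $\mathcal{I}(m,z)f$ by means of Lemma~\ref{lem3}. Write $f(z)=z+\sum_{n=2}^{\infty}a_{n}z^{n}\in\mathcal{R}^{\tau}(A,B)$, so that $\mathcal{I}(m,z)f=z+\sum_{n=2}^{\infty}\frac{m^{n-1}}{(n-1)!}e^{-m}a_{n}z^{n}$. By Lemma~\ref{lem1}, to show $\mathcal{I}(m,z)f\in\mathcal{S}(k,\lambda)$ it suffices to verify
\begin{equation*}
\sum_{n=2}^{\infty}\bigl[n((1-\lambda)+k(1+\lambda))-(1-\lambda)(1-k)\bigr]\frac{m^{n-1}}{(n-1)!}e^{-m}\left\vert a_{n}\right\vert \leq 2k .
\end{equation*}
Invoking the bound $\left\vert a_{n}\right\vert \leq (A-B)\left\vert \tau\right\vert /n$ of Lemma~\ref{lem3}, the left-hand side is dominated by
\begin{equation*}
(A-B)\left\vert \tau\right\vert e^{-m}\sum_{n=2}^{\infty}\frac{n((1-\lambda)+k(1+\lambda))-(1-\lambda)(1-k)}{n}\cdot\frac{m^{n-1}}{(n-1)!}.
\end{equation*}

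The next step is to split the summand as $((1-\lambda)+k(1+\lambda))-\frac{(1-\lambda)(1-k)}{n}$, producing two series that sum in closed form. For the first, $\sum_{n\geq2}\frac{m^{n-1}}{(n-1)!}=e^{m}-1$, exactly as in the proof of Theorem~\ref{th1}. For the second, use $\frac{1}{n(n-1)!}=\frac{1}{n!}$ to get $\sum_{n\geq2}\frac{m^{n-1}}{n!}=\frac{1}{m}\sum_{n\geq2}\frac{m^{n}}{n!}=\frac{1}{m}(e^{m}-1-m)$. Substituting these, multiplying the $e^{-m}$ through, and using $e^{-m}(e^{m}-1-m)=1-e^{-m}(1+m)$ together with $-(1-\lambda)(1-k)=(1-\lambda)(k-1)$, the dominating quantity becomes precisely
\begin{equation*}
(A-B)\left\vert \tau\right\vert\left[((1-\lambda)+k(1+\lambda))(1-e^{-m})+\frac{(1-\lambda)(k-1)}{m}(1-e^{-m}(1+m))\right],
\end{equation*}
which is $\leq 2k$ exactly when (\ref{d3}) holds; this establishes the sufficiency of (\ref{d3}).

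For the converse I would argue that (\ref{d3}) is also necessary by appealing to the sharpness assertion of Lemma~\ref{lem3}: for the extremal choice $\left\vert a_{n}\right\vert=(A-B)\left\vert \tau\right\vert/n$ all the estimates above become equalities, so if $\mathcal{I}(m,z)f$ is to belong to $\mathcal{S}(k,\lambda)$ the criterion of Lemma~\ref{lem1} forces (\ref{d3}). I expect this converse direction to be the only subtle point: the sufficiency direction is a routine term-by-term summation of exponential series (mirroring Theorems~\ref{th1} and \ref{th2}), whereas for necessity one must be careful that the extremal function is genuinely admissible in $\mathcal{R}^{\tau}(A,B)$ and that Lemma~\ref{lem1} is being applied to a function whose relevant coefficients are actually negative. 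Specializing $\lambda=0$ at the end then yields the analogous statement for $\mathcal{S}(k)$.
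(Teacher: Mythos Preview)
Your argument is essentially the same as the paper's: apply Lemma~\ref{lem1}, bound $|a_n|$ via Lemma~\ref{lem3}, split the resulting sum into the two exponential series $\sum_{n\ge 2}\frac{m^{n-1}}{(n-1)!}=e^m-1$ and $\sum_{n\ge 2}\frac{m^{n-1}}{n!}=\frac{1}{m}(e^m-1-m)$, and compare with $2k$. Your discussion of the necessity direction is in fact more explicit than the paper's, which simply asserts the ``if and only if'' after bounding the sum and does not separately address sharpness or the sign of the coefficients.
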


\begin{proof}
In view of Lemma \ref{lem1}, it suffices to show that%
\begin{equation*}
\sum\limits_{n=2}^{\infty }[n((1-\lambda )+k(1+\lambda ))+(1-\lambda )(k-1)]%
\frac{m^{n-1}}{(n-1)!}\left\vert a_{n}\right\vert \leq 2ke^{m}.
\end{equation*}

Since $f\in \mathcal{R}^{\tau }(A,B),$ then by Lemma \ref{lem3}, we get%
\begin{equation}
\left\vert a_{n}\right\vert \leq \frac{(A-B)\left\vert \tau \right\vert }{n}.
\label{vv}
\end{equation}

Thus, we have 
\begin{eqnarray*}
&&\sum\limits_{n=2}^{\infty }[n((1-\lambda )+k(1+\lambda ))+(1-\lambda
)(k-1)]\frac{m^{n-1}}{(n-1)!}\left\vert a_{n}\right\vert \\
&\leq &(A-B)\left\vert \tau \right\vert \sum\limits_{n=2}^{\infty
}[n((1-\lambda )+k(1+\lambda ))+(1-\lambda )(k-1)]\frac{m^{n-1}}{n!} \\
&=&(A-B)\left\vert \tau \right\vert \left[ ((1-\lambda )+k(1+\lambda
))\sum\limits_{n=2}^{\infty }\frac{m^{n-1}}{(n-1)!}+\frac{(1-\lambda )(k-1)}{%
m}\sum\limits_{n=2}^{\infty }\frac{m^{n}}{n!}\right] \\
&=&(A-B)\left\vert \tau \right\vert \left[ ((1-\lambda )+k(1+\lambda
))(e^{m}-1)+\frac{(1-\lambda )(k-1)}{m}(e^{m}-1-m)\right] .
\end{eqnarray*}

But this last expression is bounded above by $2ke^{m}~$if and only if~(\ref%
{d3}) holds.
\end{proof}

\begin{theorem}
\label{th6}Let $m>0$, $0<k\leq 1$ \textit{\ and} $0\leq \lambda <1.$ If $\
f\in \mathcal{R}^{\tau }(A,B),\ $then $\mathcal{F}(m,z)f$ is in $\mathcal{C}%
(k,\lambda )$ if and only if 
\begin{equation}
(A-B)\left\vert \tau \right\vert [((1-\lambda )+k(1+\lambda
))m+2k(1-e^{-m})]\leq 2k.  \label{d5}
\end{equation}
\end{theorem}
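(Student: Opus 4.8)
The plan is to run the argument of Theorem \ref{th5} again, but with the convexity criterion of Lemma \ref{lem2} in place of the starlikeness criterion of Lemma \ref{lem1}. Writing $\mathcal{I}(m,z)f=z+\sum_{n=2}^{\infty}\frac{m^{n-1}}{(n-1)!}e^{-m}a_{n}z^{n}$, Lemma \ref{lem2} shows it is enough to prove
\[
\sum_{n=2}^{\infty} n\bigl[n((1-\lambda)+k(1+\lambda))-(1-\lambda)(1-k)\bigr]\frac{m^{n-1}}{(n-1)!}\,|a_{n}|\le 2ke^{m},
\]
where the factor $e^{-m}$ coming from the coefficients of $\mathcal{I}(m,z)f$ has been moved to the right-hand side, so the target becomes $2ke^{m}$ rather than $2k$, and where $-(1-\lambda)(1-k)=(1-\lambda)(k-1)$.

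Next I would invoke Lemma \ref{lem3}: since $f\in\mathcal{R}^{\tau}(A,B)$ we have $|a_{n}|\le (A-B)|\tau|/n$. Substituting this bound, the factor $n$ produced by the convexity criterion cancels exactly against the $1/n$ in the coefficient estimate, so the left-hand side is dominated by
\[
(A-B)|\tau|\sum_{n=2}^{\infty}\bigl[n((1-\lambda)+k(1+\lambda))+(1-\lambda)(k-1)\bigr]\frac{m^{n-1}}{(n-1)!}.
\]
The series appearing here is precisely the one already summed in the proof of Theorem \ref{th1}, namely display (\ref{oo}), whose value is $((1-\lambda)+k(1+\lambda))me^{m}+2k(e^{m}-1)$. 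Hence the left-hand side is at most $(A-B)|\tau|\bigl[((1-\lambda)+k(1+\lambda))me^{m}+2k(e^{m}-1)\bigr]$, and this is $\le 2ke^{m}$ if and only if, after dividing by $e^{m}>0$, condition (\ref{d5}) holds; this gives the stated claim.

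There is really no obstacle here beyond bookkeeping. The point worth stressing is that, unlike in Theorem \ref{th2}, no quadratic reindexing $n^{2}=(n-1)(n-2)+3(n-1)+1$ is required: the extra factor $n$ carried by the convexity condition is exactly neutralised by the $1/n$ decay of the Dixit--Pal coefficient bound, so the computation collapses to the already-evaluated starlike sum of Theorem \ref{th1}. The only delicate points are keeping track of where the factor $e^{-m}$ (equivalently $e^{m}$) lives and using the sign identity $-(1-\lambda)(1-k)=(1-\lambda)(k-1)$ consistently.
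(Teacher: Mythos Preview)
Your argument is correct and follows the same route as the paper's proof: apply Lemma \ref{lem2}, insert the Dixit--Pal bound $|a_n|\le (A-B)|\tau|/n$ so that the extra factor $n$ cancels, and reduce to the sum $\sum_{n\ge2}[n((1-\lambda)+k(1+\lambda))+(1-\lambda)(k-1)]\frac{m^{n-1}}{(n-1)!}$, which equals $((1-\lambda)+k(1+\lambda))me^{m}+2k(e^{m}-1)$. The only cosmetic difference is that the paper redoes the $n=(n-1)+1$ splitting in place, whereas you more efficiently cite the already-computed value (\ref{oo}) from Theorem \ref{th1}; your reading of $\mathcal{F}(m,z)f$ as the convolution $\mathcal{I}(m,z)f$ is also the intended one.
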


\begin{proof}
In view of Lemma \ref{lem2}, it suffices to show that%
\begin{equation*}
\sum\limits_{n=2}^{\infty }n[n((1-\lambda )+k(1+\lambda ))+(1-\lambda )(k-1)]%
\frac{m^{n-1}}{(n-1)!}\left\vert a_{n}\right\vert \leq 2ke^{m}.
\end{equation*}

Using (\ref{vv}), we have%
\begin{eqnarray*}
&&\sum\limits_{n=2}^{\infty }n[n((1-\lambda )+k(1+\lambda ))+(1-\lambda
)(k-1)]\frac{m^{n-1}}{(n-1)!}\left\vert a_{n}\right\vert \\
&\leq &\sum\limits_{n=2}^{\infty }n[n((1-\lambda )+k(1+\lambda ))+(1-\lambda
)(k-1)]\frac{m^{n-1}}{(n-1)!}\frac{(A-B)\left\vert \tau \right\vert }{n} \\
&=&(A-B)\left\vert \tau \right\vert \sum\limits_{n=2}^{\infty }[n((1-\lambda
)+k(1+\lambda ))+(1-\lambda )(k-1)]\frac{m^{n-1}}{(n-1)!} \\
&=&(A-B)\left\vert \tau \right\vert \sum\limits_{n=2}^{\infty
}[(n-1)((1-\lambda )+k(1+\lambda ))+2k]\frac{m^{n-1}}{(n-1)!} \\
&=&(A-B)\left\vert \tau \right\vert \left[ ((1-\lambda )+k(1+\lambda
))\sum\limits_{n=2}^{\infty }\frac{m^{n-1}}{(n-2)!}+2k\sum\limits_{n=2}^{%
\infty }\frac{m^{n-1}}{(n-1)!}\right] \\
&=&(A-B)\left\vert \tau \right\vert [((1-\lambda )+k(1+\lambda
))me^{m}+2k(e^{m}-1)].
\end{eqnarray*}

But this last expression is bounded above by $2ke^{m}~$if and only if~(\ref%
{d5}) holds.
\end{proof}

By taking $\lambda =0$ in Theorems \ref{th5} and \ref{th6} , we obtain the
following corollaries.

\begin{corollary}
Let $m>0$\textit{\ and} $0<k\leq 1.$ If $\ f\in \mathcal{R}^{\tau }(A,B),\ $%
then $\mathcal{I}(m,z)f$ is in $\mathcal{S}(k)$ if and only if 
\begin{equation}
(A-B)\left\vert \tau \right\vert \left[ (1+k)(1-e^{-m})+\frac{(k-1)}{m}%
(1-e^{-m}(1+m))\right] \leq 2k.
\end{equation}
\end{corollary}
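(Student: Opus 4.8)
The plan is to read this off from Theorem~\ref{th5} by specializing $\lambda=0$. First I would recall the identification $\mathcal{S}(k,0)=\mathcal{S}(k)$ recorded in the introduction, so that the assertion ``$\mathcal{I}(m,z)f\in\mathcal{S}(k)$'' is literally the assertion ``$\mathcal{I}(m,z)f\in\mathcal{S}(k,0)$''. Then I would substitute $\lambda=0$ into the bracketed expression of~(\ref{d3}): the coefficient $(1-\lambda)+k(1+\lambda)$ collapses to $1+k$ and the factor $(1-\lambda)(k-1)$ collapses to $k-1$, so~(\ref{d3}) becomes exactly the inequality $(A-B)\left\vert\tau\right\vert\bigl[(1+k)(1-e^{-m})+\tfrac{k-1}{m}(1-e^{-m}(1+m))\bigr]\le 2k$ claimed in the corollary. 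Since Theorem~\ref{th5} is stated as an equivalence, both implications transfer simultaneously and nothing further is needed.

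If a self-contained write-up is preferred, I would instead mimic the proof of Theorem~\ref{th5} with $\lambda=0$ fixed throughout: invoke Lemma~\ref{lem1} to reduce ``$\mathcal{I}(m,z)f\in\mathcal{S}(k)$'' to the coefficient bound $\sum_{n\ge2}[n(1+k)-(1-k)]\frac{m^{n-1}}{(n-1)!}\left\vert a_n\right\vert\le 2ke^{m}$; apply Lemma~\ref{lem3} in the form $\left\vert a_n\right\vert\le (A-B)\left\vert\tau\right\vert/n$; use $\frac{m^{n-1}}{(n-1)!\,n}=\frac{m^{n-1}}{n!}$ and split the resulting series as $(1+k)\sum_{n\ge2}\frac{m^{n-1}}{(n-1)!}+\frac{k-1}{m}\sum_{n\ge2}\frac{m^{n}}{n!}$, which evaluate to $(1+k)(e^{m}-1)$ and $\frac{k-1}{m}(e^{m}-1-m)$ respectively; finally divide by $e^{m}$ to recover the displayed left-hand side.

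I do not expect any genuine obstacle here — the argument is routine exponential-series bookkeeping identical to what precedes. The only point deserving a word of care is the ``only if'' direction: as in Theorem~\ref{th5}, one uses that the estimate of Lemma~\ref{lem3} is sharp, so the extremal function for which equality holds there forces the coefficient criterion of Lemma~\ref{lem1} to be tight, showing the condition cannot be relaxed. Everything else is immediate from the $\lambda=0$ substitution.
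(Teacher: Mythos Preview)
Your proposal is correct and matches the paper's approach exactly: the paper derives this corollary simply by substituting $\lambda=0$ into Theorem~\ref{th5}, precisely as in your first paragraph. Your additional remarks on the self-contained version and on the sharpness needed for the ``only if'' direction go beyond what the paper itself records, but they are consistent with it.
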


\begin{corollary}
Let $m>0$\textit{\ and} $0<k\leq 1.$ If $\ f\in \mathcal{R}^{\tau }(A,B),\ $%
then $\mathcal{I}(m,z)f$ is in $\mathcal{C}(k)$ if and only if 
\begin{equation}
(A-B)\left\vert \tau \right\vert [(1+k)m+2k(1-e^{-m})]\leq 2k.
\end{equation}
\end{corollary}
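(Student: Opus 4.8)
The plan is to mirror the proofs of Theorems \ref{th1} and \ref{th5}: first reduce membership of $\mathcal{I}(m,z)f$ in $\mathcal{C}(k,\lambda )$ to a coefficient inequality via Lemma \ref{lem2}, then bound the coefficients of $\mathcal{I}(m,z)f$ by means of Lemma \ref{lem3}, and finally evaluate the resulting power-series sums in closed form.

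Concretely, since $\mathcal{I}(m,z)f=z+\sum_{n=2}^{\infty }\frac{m^{n-1}}{(n-1)!}e^{-m}a_{n}z^{n}$, Lemma \ref{lem2} shows it suffices to prove
\[
\sum_{n=2}^{\infty }n\bigl[n((1-\lambda )+k(1+\lambda ))+(1-\lambda )(k-1)\bigr]\frac{m^{n-1}}{(n-1)!}\left\vert a_{n}\right\vert \leq 2ke^{m}.
\]
Because $f\in \mathcal{R}^{\tau }(A,B)$, Lemma \ref{lem3} gives $\left\vert a_{n}\right\vert \leq (A-B)\left\vert \tau \right\vert /n$, and the factor $n$ in front of the bracket cancels exactly against this $1/n$. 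The left-hand side is therefore dominated by
\[
(A-B)\left\vert \tau \right\vert \sum_{n=2}^{\infty }\bigl[n((1-\lambda )+k(1+\lambda ))+(1-\lambda )(k-1)\bigr]\frac{m^{n-1}}{(n-1)!},
\]
which is precisely $(A-B)\left\vert \tau \right\vert $ times the series already computed in (\ref{oo}).

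The one algebraic point to be careful about is the substitution $n=(n-1)+1$, under which the bracket collapses to $(n-1)((1-\lambda )+k(1+\lambda ))+2k$; verifying that the constant term is exactly $2k$ is the same identity used in Theorem \ref{th1}. Splitting the sum then gives $((1-\lambda )+k(1+\lambda ))\sum_{n=2}^{\infty }\frac{m^{n-1}}{(n-2)!}+2k\sum_{n=2}^{\infty }\frac{m^{n-1}}{(n-1)!}=((1-\lambda )+k(1+\lambda ))me^{m}+2k(e^{m}-1)$, so the quantity we must bound is at most $(A-B)\left\vert \tau \right\vert \bigl[((1-\lambda )+k(1+\lambda ))me^{m}+2k(e^{m}-1)\bigr]$, and this is $\leq 2ke^{m}$ if and only if, after dividing through by $e^{m}$, the inequality (\ref{d5}) holds; the sharpness asserted in Lemma \ref{lem3} supplies an extremal $f$ that makes the reverse implication meaningful. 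I do not expect any genuine obstacle: the argument is routine once the index shift is bookkept correctly, and the only mild care needed is confirming the collapsed constant $2k$ and keeping the factor $(A-B)\left\vert \tau \right\vert $ on the correct side throughout the chain of inequalities.
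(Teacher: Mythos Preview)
Your proposal is correct and reproduces, with general $\lambda$, exactly the computation the paper carries out in the proof of Theorem \ref{th6}; the paper then obtains this corollary simply by specializing $\lambda=0$ in that theorem. The only step you left implicit is this final substitution $\lambda=0$, which turns (\ref{d5}) into the displayed inequality of the corollary.
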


\section{An integral operator}

\begin{theorem}
\label{th3}If $m>0$, $0<k\leq 1$ \textit{\ and} $0\leq \lambda <1$, then 
\begin{equation}
\mathcal{G(}m,z\mathcal{)=}\dint\limits_{0}^{z}\frac{\mathcal{F}(m,t)}{t}dt
\label{pl}
\end{equation}%
is in $\mathcal{C}(k,\lambda )$ if and only if ~(\ref{d1}) is satisfied.
\end{theorem}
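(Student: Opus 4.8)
The plan is to follow the same strategy as in the proofs of Theorems \ref{th1} and \ref{th2}: first write the series for $\mathcal{G}(m,z)$ explicitly, and then test it against the coefficient criterion of Lemma \ref{lem2}. Starting from
\[
\frac{\mathcal{F}(m,t)}{t}=1-\sum_{n=2}^{\infty}\frac{m^{n-1}}{(n-1)!}e^{-m}t^{n-1},
\]
which converges locally uniformly on $\mathcal{U}$ (indeed on all of $\mathbb{C}$, since the radius of convergence is infinite), I would integrate term by term to obtain
\[
\mathcal{G}(m,z)=z-\sum_{n=2}^{\infty}\frac{m^{n-1}}{n\,(n-1)!}e^{-m}z^{n}=z-\sum_{n=2}^{\infty}\frac{m^{n-1}}{n!}e^{-m}z^{n},
\]
so that $\mathcal{G}(m,z)$ is of the form (\ref{m1}) with $\left\vert a_{n}\right\vert=\dfrac{m^{n-1}}{n!}e^{-m}$.

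Next, by Lemma \ref{lem2}, the function $\mathcal{G}(m,z)$ lies in $\mathcal{C}(k,\lambda)$ if and only if
\[
\sum_{n=2}^{\infty}n\big[n((1-\lambda)+k(1+\lambda))-(1-\lambda)(1-k)\big]\frac{m^{n-1}}{n!}e^{-m}\leq 2k.
\]
The key observation is that the extra factor $n$ supplied by Lemma \ref{lem2} cancels the factor $n$ in $n!=n\,(n-1)!$, so the left-hand side equals $e^{-m}$ times exactly the series evaluated in (\ref{oo}) during the proof of Theorem \ref{th1}. Hence, invoking (\ref{oo}), the left-hand side is
\[
e^{-m}\big[((1-\lambda)+k(1+\lambda))me^{m}+2k(e^{m}-1)\big]=((1-\lambda)+k(1+\lambda))m+2k(1-e^{-m}).
\]
Finally I would note that $((1-\lambda)+k(1+\lambda))m+2k(1-e^{-m})\leq 2k$ is equivalent to $((1-\lambda)+k(1+\lambda))m\leq 2ke^{-m}$, i.e. to $((1-\lambda)+k(1+\lambda))me^{m}\leq 2k$, which is precisely condition (\ref{d1}); this gives the claimed equivalence.

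I do not expect a genuine obstacle here: the only point needing a word of justification is the term-by-term integration, which is immediate from the local uniform convergence of the defining power series, and once $\mathcal{G}(m,z)$ is written out, the arithmetic is the very same computation as in Theorem \ref{th1} (the division by $n$ in the integral operator exactly offsets the multiplication by $n$ that distinguishes Lemma \ref{lem2} from Lemma \ref{lem1}), so I would simply cite (\ref{oo}) rather than repeat it.
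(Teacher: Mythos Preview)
Your proposal is correct and follows essentially the same approach as the paper: derive the series for $\mathcal{G}(m,z)$, apply Lemma \ref{lem2}, observe that the factor $n$ cancels against $n!$ to reduce to the sum already computed in (\ref{oo}), and conclude. The only cosmetic difference is that the paper moves the factor $e^{-m}$ to the right-hand side at the outset (writing the target inequality as $\leq 2ke^{m}$), whereas you keep it on the left and clear it at the end; the arithmetic is identical.
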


\begin{proof}
Since%
\begin{equation*}
\mathcal{G(}m,z\mathcal{)}=z-\sum\limits_{n=2}^{\infty }\frac{e^{-m}m^{n-1}}{%
n!}z^{n}
\end{equation*}%
then by Lemma \ref{lem2}, we need only to show that%
\begin{equation*}
\sum\limits_{n=2}^{\infty }n[n((1-\lambda )+k(1+\lambda ))+(1-\lambda )(k-1)]%
\frac{m^{n-1}}{n!}\leq 2ke^{m}.
\end{equation*}

or, equivalently%
\begin{equation*}
\sum\limits_{n=2}^{\infty }[n((1-\lambda )+k(1+\lambda ))+(1-\lambda )(k-1)]%
\frac{m^{n-1}}{(n-1)!}\leq 2ke^{m}.
\end{equation*}

From (\ref{oo}) it follows that%
\begin{eqnarray*}
&&\sum\limits_{n=2}^{\infty }[n((1-\lambda )+k(1+\lambda ))+(1-\lambda
)(k-1)]\frac{m^{n-1}}{(n-1)!} \\
&=&\ \left( (1-\lambda )+k(1+\lambda ))me^{m}+2k\allowbreak (e^{m}-1\right)
\end{eqnarray*}

and this last expression is bounded above by $2ke^{m}~$if and only if~(\ref%
{d1}) holds.
\end{proof}

The proof of Theorem \ref{th4} below is much similar to that of Theorem \ref%
{th3} and so the details are omitted.

\begin{theorem}
\label{th4}If $m>0$, $0<k\leq 1$ \textit{\ and} $0\leq \lambda <1$, then the
integral operator defined by(\ref{pl}) is in $\mathcal{S}(k,\lambda )$ if
and only if ~%
\begin{equation*}
((1-\lambda )+k(1+\lambda ))(1-e^{-m})+\frac{(1-\lambda )(k-1)}{m}%
(1-e^{-m}-me^{-m})\leq 2k.
\end{equation*}
\end{theorem}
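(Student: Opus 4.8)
The plan is to follow exactly the template of Theorem~\ref{th3}, the only change being that the coefficients of $\mathcal{G}(m,z)$ carry the factor $\frac{1}{n!}$ rather than $\frac{1}{(n-1)!}$, so that we now invoke Lemma~\ref{lem1} (the criterion for $\mathcal{S}(k,\lambda)$) instead of Lemma~\ref{lem2}. Concretely, writing
\[
\mathcal{G}(m,z)=z-\sum_{n=2}^{\infty}\frac{e^{-m}m^{n-1}}{n!}z^{n},
\]
I would first apply Lemma~\ref{lem1} to reduce membership in $\mathcal{S}(k,\lambda)$ to the inequality
\[
\sum_{n=2}^{\infty}\bigl[n((1-\lambda)+k(1+\lambda))+(1-\lambda)(k-1)\bigr]\frac{m^{n-1}}{n!}\leq 2ke^{m},
\]
after clearing the common factor $e^{-m}$.

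The next step is to evaluate the left-hand series in closed form. I would split it into two pieces. For the term carrying $n((1-\lambda)+k(1+\lambda))$, the factor $n$ cancels one factorial, giving $((1-\lambda)+k(1+\lambda))\sum_{n=2}^{\infty}\frac{m^{n-1}}{(n-1)!}=((1-\lambda)+k(1+\lambda))(e^{m}-1)$. For the term carrying the constant $(1-\lambda)(k-1)$, I would write $\frac{m^{n-1}}{n!}=\frac{1}{m}\cdot\frac{m^{n}}{n!}$ and sum from $n=2$, obtaining $\frac{(1-\lambda)(k-1)}{m}(e^{m}-1-m)$. Hence the series equals
\[
((1-\lambda)+k(1+\lambda))(e^{m}-1)+\frac{(1-\lambda)(k-1)}{m}(e^{m}-1-m).
\]

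Finally, I would observe that this last expression is bounded above by $2ke^{m}$ if and only if, upon dividing through by $e^{m}$, the stated inequality
\[
((1-\lambda)+k(1+\lambda))(1-e^{-m})+\frac{(1-\lambda)(k-1)}{m}(1-e^{-m}-me^{-m})\leq 2k
\]
holds, which completes both directions simultaneously. There is no real obstacle here: the argument is entirely parallel to Theorem~\ref{th3}, and the only point requiring a little care is the bookkeeping of the factorial shift (working with $\frac{m^{n-1}}{n!}$ rather than $\frac{m^{n-1}}{(n-1)!}$) when recognizing the exponential sums, since this is precisely where the present statement differs from the one for $\mathcal{C}(k,\lambda)$ in Theorem~\ref{th3}.
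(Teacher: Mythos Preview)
Your proposal is correct and follows exactly the approach the paper indicates: the paper omits the proof entirely, stating only that it is ``much similar to that of Theorem~\ref{th3},'' and your argument supplies precisely the expected details by invoking Lemma~\ref{lem1} with the coefficients $\frac{e^{-m}m^{n-1}}{n!}$ of $\mathcal{G}(m,z)$ and summing via the standard exponential identities. The computation you outline in fact coincides with the inner part of the paper's proof of Theorem~\ref{th5} (where the same sums $\sum_{n\ge 2}\frac{m^{n-1}}{(n-1)!}$ and $\sum_{n\ge 2}\frac{m^{n}}{n!}$ appear), so there is nothing to add.
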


By taking $\lambda =0$ in Theorems \ref{th3} and \ref{th4} , we obtain the
following corollaries.

\begin{corollary}
If $m>0$\textit{\ and} $\ 0<k\leq 1$, then the integral operator defined by(%
\ref{pl}) is in $\mathcal{C}(k)$ if and only if ~(\ref{tt}) is satisfied.
\end{corollary}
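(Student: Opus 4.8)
The plan is to obtain this corollary as an immediate specialization of Theorem~\ref{th3}. Recall that in the introduction the normalization $\mathcal{C}(k,0)=\mathcal{C}(k)$ was recorded, so that $\mathcal{G}(m,z)\in\mathcal{C}(k)$ is equivalent to $\mathcal{G}(m,z)\in\mathcal{C}(k,0)$. Hence nothing new needs to be established: it suffices to read off Theorem~\ref{th3} at the parameter value $\lambda=0$, with no separate series manipulation required.

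The one substantive step is the algebraic reduction of the membership criterion. By Theorem~\ref{th3}, $\mathcal{G}(m,z)\in\mathcal{C}(k,\lambda)$ if and only if condition~(\ref{d1}) holds, namely $((1-\lambda)+k(1+\lambda))me^{m}\leq 2k$. Setting $\lambda=0$ collapses the bracketed factor to $(1-0)+k(1+0)=1+k$, so the inequality becomes $(1+k)me^{m}\leq 2k$, which is exactly condition~(\ref{tt}). Combining this reduction with the identification $\mathcal{C}(k)=\mathcal{C}(k,0)$ then yields the claim in both directions of the biconditional at once.

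I anticipate no genuine obstacle, since the argument is a direct specialization rather than a fresh derivation. Should a self-contained proof be preferred, I would instead rerun the computation behind Theorem~\ref{th3}: starting from the series $\mathcal{G}(m,z)=z-\sum_{n=2}^{\infty}\frac{e^{-m}m^{n-1}}{n!}z^{n}$, apply Lemma~\ref{lem2}, reduce the resulting sum via the identity~(\ref{oo}) to $((1-\lambda)+k(1+\lambda))me^{m}+2k(e^{m}-1)$, impose $\lambda=0$, and compare against $2ke^{m}$. Either route terminates in the same inequality~(\ref{tt}), so the only point to verify carefully is the elementary simplification of the coefficient factor at $\lambda=0$.
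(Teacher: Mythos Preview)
Your proposal is correct and follows exactly the paper's approach: the corollary is obtained by specializing Theorem~\ref{th3} at $\lambda=0$, using the identification $\mathcal{C}(k,0)=\mathcal{C}(k)$ and observing that condition~(\ref{d1}) reduces to~(\ref{tt}).
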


\begin{corollary}
If $m>0$\textit{\ and} $\ 0<k\leq 1$,then the integral operator defined by(%
\ref{pl}) is in $\mathcal{S}(k)$ if and only if ~%
\begin{equation*}
(1+k)(1-e^{-m})+\frac{(k-1)}{m}(1-e^{-m}-me^{-m})\leq 2k.
\end{equation*}
\end{corollary}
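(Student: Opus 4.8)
The plan is to mirror the proof of Theorem \ref{th3}, but now applying the coefficient criterion for $\mathcal{S}(k,\lambda)$ (Lemma \ref{lem1}) in place of the one for $\mathcal{C}(k,\lambda)$. First I would compute $\mathcal{G}(m,z)$ explicitly by integrating term by term: since
\[
\frac{\mathcal{F}(m,t)}{t}=1-\sum_{n=2}^{\infty}\frac{m^{n-1}}{(n-1)!}e^{-m}t^{n-1},
\]
integration from $0$ to $z$ gives $\mathcal{G}(m,z)=z-\sum_{n=2}^{\infty}\frac{e^{-m}m^{n-1}}{n!}z^{n}$, a function of the form (\ref{m1}) with $|a_{n}|=e^{-m}m^{n-1}/n!$.

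Next I would invoke Lemma \ref{lem1}: $\mathcal{G}(m,z)\in\mathcal{S}(k,\lambda)$ if and only if
\[
\sum_{n=2}^{\infty}\bigl[n((1-\lambda)+k(1+\lambda))-(1-\lambda)(1-k)\bigr]\frac{e^{-m}m^{n-1}}{n!}\leq 2k,
\]
which, after multiplying by $e^{m}$ and writing $-(1-\lambda)(1-k)=(1-\lambda)(k-1)$, is equivalent to
\[
\sum_{n=2}^{\infty}\bigl[n((1-\lambda)+k(1+\lambda))+(1-\lambda)(k-1)\bigr]\frac{m^{n-1}}{n!}\leq 2ke^{m}.
\]

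The computational core is then to sum the left-hand side in closed form. I would split it into $((1-\lambda)+k(1+\lambda))\sum_{n=2}^{\infty}\frac{n}{n!}m^{n-1}$ and $(1-\lambda)(k-1)\sum_{n=2}^{\infty}\frac{1}{n!}m^{n-1}$; using $n/n!=1/(n-1)!$ the first collapses to $((1-\lambda)+k(1+\lambda))(e^{m}-1)$, and rewriting $m^{n-1}/n!=m^{n}/(m\,n!)$ the second collapses to $\frac{(1-\lambda)(k-1)}{m}(e^{m}-1-m)$. Hence the inequality becomes
\[
((1-\lambda)+k(1+\lambda))(e^{m}-1)+\frac{(1-\lambda)(k-1)}{m}(e^{m}-1-m)\leq 2ke^{m},
\]
and dividing through by $e^{m}$ yields exactly the asserted condition. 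Since every step is an equivalence, both implications are obtained simultaneously. There is no genuine obstacle here; the only point needing a little care is the bookkeeping in the second series, where the extra factor $1/m$ is introduced to convert $\sum_{n\ge 2}m^{n-1}/n!$ into a multiple of $\sum_{n\ge 2}m^{n}/n!=e^{m}-1-m$, and the sign of $(1-\lambda)(k-1)$ (negative when $k<1$) should be tracked but does not affect the algebra.
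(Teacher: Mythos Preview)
Your proposal is correct and follows essentially the same route as the paper: the paper derives this corollary by specializing $\lambda=0$ in Theorem~\ref{th4}, whose (omitted) proof is exactly the computation you carry out---apply Lemma~\ref{lem1} to the coefficients $|a_n|=e^{-m}m^{n-1}/n!$ of $\mathcal{G}(m,z)$, split the resulting sum, and evaluate the two pieces as $(e^m-1)$ and $\tfrac{1}{m}(e^m-1-m)$. The only cosmetic gap is that you keep a general $\lambda$ throughout and never explicitly set $\lambda=0$ at the end; doing so immediately turns your final displayed inequality into the corollary's condition.
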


\end{document}